\documentclass[10pt]{article}
\usepackage{amssymb,amsmath}

\def\R{\mathbb{R}}

\def\stac{\stackrel}
\def\~{\stac{\sim}}
\def\8{\infty}

\def\({\left(}
\def\){\right)}

\def\be{\begin{equation}}
\def\ee{\end{equation}}

\def\ba{\begin{array}}
\def\ea{\end{array}}

\def\({\left(}\def\){\right)}

\def\to{\rightarrow}

\begin{document}
\title{
Global existence, uniform boundedness and asymptotic behavior for Gierer--Meinhardt system with critical exponent $qr=(p-1)(s+1)$
}
   \author{Xianfa Song{\thanks{E-mail:\ \tt songxianfa@tju.edu.cn (or songxianfa2004@163.com)
  }}\\
 \small Department of Mathematics, School of Mathematics, Tianjin University,\\
\small Tianjin, 300072, P. R. China\\
Zeen Song\\
\small  School of Mathematics, Statistics and Mechnics, Beijing University of Technology,\\
\small No. 100 Pingleyuan, Chaoyang District, Beijing, 100124, P. R. China
}

\maketitle
\date{}

\newtheorem{theorem}{Theorem}[section]
\newtheorem{definition}{Definition}[section]
\newtheorem{lemma}{Lemma}[section]
\newtheorem{proposition}{Proposition}[section]
\newtheorem{corollary}{Corollary}[section]
\newtheorem{remark}{Remark}[section]
\renewcommand{\theequation}{\thesection.\arabic{equation}}
\catcode`@=11 \@addtoreset{equation}{section} \catcode`@=12

\begin{abstract}
In this paper, we study the following Gierer--Meinhardt system 
\begin{equation*}
\begin{cases}
u_t=d_1\Delta u-a_1u+u^p/v^q+\delta_1(x),\quad x\in \Omega, t>0,\\
v_t=d_2\Delta v-a_2v+u^r/v^s+\delta_2(x), \quad x\in \Omega, t>0,\\
\partial_\nu u=\partial_\nu v=0,\qquad\qquad \qquad\qquad\quad x\in \partial \Omega, t>0,\\
u(x,0)=u_0(x),\quad v(x,0)=v_0(x),\quad x\in \Omega, 
\end{cases}
\end{equation*}
where $\Omega\subset \mathbb{R}^N$ is a bounded domain, $\nu$ is the outer norm vector with respect to the smooth boundary $\partial \Omega$, $d_1,d_2, p,q,r>0$, $s>-1$, $p-1<r$, $qr=(p-1)(s+1)$, 
$\delta_1(x)$, $\delta_2(x)$, $u_0(x)$ and $v_0(x)$ are continuous functions. 

For every pair $d_1,d_2>0$,  we respectively establish the global existence, uniform boundedness and asymptotic behavior for Gierer--Meinhardt system with critical exponent $qr=(p-1)(s+1)$ under certain conditions.

\end{abstract}
\noindent\textbf{Keywords.} Gierer--Meinhardt system; Global existence; Uniform boundedness; Asymptotic behavior.

\section{Introduction}\label{sec:intro}
\qquad In this paper, we consider the following Gierer--Meinhardt system
\begin{equation}\label{eq:pde}
\begin{cases}
u_t=d_1\Delta u-a_1u+u^p/v^q+\delta_1(x),\quad x\in \Omega, t>0,\\
v_t=d_2\Delta v-a_2v+u^r/v^s+\delta_2(x),\quad x\in \Omega, t>0,\\
\partial_\nu u=\partial_\nu v=0,\qquad\qquad \qquad\qquad\quad   x\in \partial \Omega, t>0,\\
u(x,0)=u_0(x),\quad v(x,0)=v_0(x),\quad  x\in \Omega. 
\end{cases}
\end{equation}
Here $\Omega\subset\R^N$ is bounded, smooth and connected domain, $\nu$ is the outer norm vector with respect to the smooth boundary $\partial \Omega$, the coefficients and exponents satisfy
\begin{align}
d_1,d_2,a_1,a_2,q,r>0,\quad p>1,\quad s>-1,\quad
p-1<r,\quad qr=(p-1)(s+1).\label{zijia1}
\end{align}
The sources $\delta_1(x)$ and $\delta_2(x)$ are nonnegative, time-independent elements of $C^\alpha(\overline\Omega)$ for some $\alpha\in(0,1)$, the initial data $u_0,v_0\in C^{2+\alpha}(\overline\Omega)$ are strictly positive and satisfy the compatible boundary conditions $\partial_\nu u_0=\partial_\nu v_0=0$.

There are many results on the global existence and blowup of the solution to Gierer--Meinhardt  system subject to Neumann boundary condition, see \cite{ CMX2020, HY2016, Jiang2006, LCQ1995, LPS2017, Song2020, ST2011, Zou1, Zou2} and the reference therein. Some authors are interested in the results on Gierer--Meinhardt system with zero Dirichlet boundary conditions, see \cite{CSX2017, Ghergu2007, Ghergu2008, Ghergu2012}. 

However, there are still many open problems on the global existence and blowup of the solution to Gierer--Meinhardt  system. In this paper, we focus on Gierer--Meinhardt  system subject to Neumann boundary condition with critical exponents $qr=(p-1)(s+1)$. Our contribution can be listed as follows. First, differing to the assumptions in \cite{LPS2017}, we needn't to require $d_1=d_2$ in this paper. For every pair $d_1,d_2>0$,  we respectively establish the global existence results on Gierer--Meinhardt system with critical exponent $qr=(p-1)(s+1)$ under certain conditions. 
Second, since global existence doesn't mean global boundedness, it is a very interesting result that the solutions are global boundedness.  Third, we obtain the decay rate of the solution when the sources in the two equations are both absent. Fourth, when the source $\delta_1\equiv 0$ while $\delta_2\not \equiv 0$, we obtain the decay rate of $u$ and the asymptotic behavior for $v$ which converges to the according stationary state $h$. 

This paper is organized as follows. In Section 2, we first establish a quotient identity,  then we can obtain the results on global existence and uniform boundedness. In Section 3, we consider the decay behavior for the solution. In Section 4, we give some examples and discussions.

\section{Global existence and uniform boundedness of the solution to (\ref{eq:pde}) }

\qquad In this section, we first establish a quotient identity,  then we can obtain the results on global existence and uniform boundedness.

First, recall the following results in \cite{LPS2017}:
\begin{equation}\label{eq:finite-positive}
u\ge (\min u_0)e^{-a_1t},\qquad v\ge\eta_0e^{-a_2t},\qquad \eta_0=\min v_0>0,
\end{equation}
for any $t\in [0,T]\subset[0,T_{\max})$, where $[0,T_{\max})$ is the maximal time interval.

Denote
\begin{align}
&\theta=\frac{p-1}{r}=\frac {q}{s+1}\in(0,1),\quad c=\frac{q}{p},
\quad K=\frac{(d_1+d_2)^2}{4d_1d_2}\ge 1,\label{eq:basic}\\
&M=p\max\{1,N/2\},\qquad
\Phi_{\theta}(y)=(1-\theta)(\theta/y)^{\theta/(1-\theta)},\qquad
H(y)=a_2y+\Phi_{\theta}(y)
\end{align}
It is easy to verify that the unique minimizer of $H(y)$ is $y_*=\theta a_2^{\theta-1}$. 

For any $m>1$, $n>M$, let
\begin{align}
&E(t)=\int_{\Omega} u^mv^{-n}dx,\quad A(x,t)=\frac{u^r}{v^{s+1}},\quad
G(x,t)=\frac{u^p}{v^q},\quad W(x,t)=u^{m/2}v^{-n/2},\\
&X(x,t)=u^{(m-2)/2}v^{-n/2}\nabla u,\qquad
Z(x,t)=u^{m/2}v^{-(n+2)/2}\nabla v.
\end{align}

Differentiating $E(t)$, then using the first equation and the second one in \ref{eq:pde}, and integrating by parts, we get the exact identity
\begin{align}\label{eq:identity}
E'(t)={}&-D_{m,n}+(na_2-ma_1)E(t)+\int_{\Omega} u^mv^{-n}(mA^\theta-nA)dx\nonumber\\
&+m\int_{\Omega}\delta_1u^{m-1}v^{-n}dx
-n\int_{\Omega}\delta_2u^mv^{-n-1}dx,
\end{align}
where
\begin{equation}\label{eq:matrix}
D_{m,n}(t)=\int_{\Omega}\sum_{j=1}^N z_j^{\mathsf T}Qz_jdx,\quad
z_j=(X_j,Z_j)^{\mathsf T},\quad
Q=\begin{pmatrix}
m(m-1)d_1&-mn(d_1+d_2)/2\\
-mn(d_1+d_2)/2&n(n+1)d_2
\end{pmatrix}.
\end{equation}
For every $\frac{n}{m}>0$ and $A\ge 0$, we can obtain
\begin{equation}\label{eq:scalar-max}
mA^\theta-nA\le m\Phi_{\theta}(\frac{n}{m})
\end{equation}
by maximizing the left-hand side at $A=(m\theta/n)^{1/(1-\theta)}$.

Now we give a result on forcing-to-$L^\infty$ transfer.

\begin{lemma}\label{lem:transfer}
Suppose that $k>\max\{1,N/2\}$ and $\sup_{t<T_{\max}}\|G(t)\|_{L^k}\le C_G$. Then both components $u$ and $v$ have upper bounds for any finite time, which means that the solution $(u,v)$ is global.
\end{lemma}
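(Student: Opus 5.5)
The plan is to view the $u$-equation as a linear heat equation with forcing. Write it as
\[
u_t-d_1\Delta u+a_1u=f:=G+\delta_1,\qquad \partial_\nu u=0 .
\]
By hypothesis, $f$ is bounded in $L^k(\Omega)$ uniformly for $t<T_{\max}$, because $\delta_1\in C^\alpha(\overline\Omega)$ is bounded.

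\textbf{Bound on $u$.} By the variation of constants formula,
\[
u(t)=e^{t(d_1\Delta-a_1)}u_0+\int_0^t e^{(t-\tau)(d_1\Delta-a_1)}f(\tau)\,d\tau .
\]
The Neumann heat semigroup satisfies the $L^k$--$L^\infty$ smoothing estimate
\[
\|e^{t d_1\Delta}w\|_{L^\infty}\le C\bigl(1+t^{-N/(2k)}\bigr)\|w\|_{L^k}.
\]
The exponent $N/(2k)$ is less than $1$ exactly because $k>N/2$, so the singularity $\tau\mapsto (t-\tau)^{-N/(2k)}$ is integrable. Combined with the damping $e^{-a_1(t-\tau)}$, this gives
\[
\|u(t)\|_{L^\infty}\le \|u_0\|_{L^\infty}+C(C_G+\|\delta_1\|_{L^\infty})\int_0^\infty \bigl(1+\sigma^{-N/(2k)}\bigr)e^{-a_1\sigma}\,d\sigma .
\]
This bound on $u$ is uniform in $t<T_{\max}$.

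\textbf{Bound on $v$.} Use the lower bound \eqref{eq:finite-positive}, $v\ge \eta_0e^{-a_2t}$, valid on $[0,T]$. Then
\[
u^r/v^s\le \|u\|_\infty^r\,\max\{\|v\|_\infty^{-s},\ (\eta_0e^{-a_2T})^{-s}\}.
\]
If $s\ge0$, the source $u^r/v^s$ is bounded by $\|u\|_\infty^r(\eta_0e^{-a_2T})^{-s}$ on $[0,T]$. If $-1<s<0$, then $u^r/v^s=u^rv^{|s|}$ with $|s|<1$, which is sublinear in $v$. In either case, comparison with the spatially homogeneous ODE
\[
y'=-a_2y+C_T(1+y^{|s|})+\|\delta_2\|_\infty,\qquad y(0)=\|v_0\|_\infty ,
\]
shows that $y$ stays finite on $[0,T]$, because the growth is sublinear. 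Hence $v\le C(T)$ on $[0,T]$ for every $T<T_{\max}$, even for $T$ close to $T_{\max}$ if $T_{\max}<\infty$.

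\textbf{Global existence.} The local theory in \cite{LPS2017} gives a blow-up alternative. If $T_{\max}<\infty$, then
\[
\limsup_{t\to T_{\max}}\bigl(\|u\|_\infty+\|v\|_\infty+\|1/u\|_\infty+\|1/v\|_\infty\bigr)=\infty .
\]
The bounds above, together with \eqref{eq:finite-positive}, keep all four quantities finite on $[0,T_{\max})$ when $T_{\max}<\infty$. Hence $T_{\max}=\infty$.

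The main obstacle is the smoothing step for $u$: it requires a clean $L^k\to L^\infty$ Neumann semigroup estimate, which is where $k>N/2$ is essential. An alternative is a Moser--Alikakos iteration on $u$ with the forcing term in $L^k$. Everything else is comparison.
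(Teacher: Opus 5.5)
Your bound on $u$ follows the paper's argument: Duhamel's formula, the Neumann $L^k$--$L^\infty$ smoothing estimate with the integrable singularity $(t-\tau)^{-N/(2k)}$, and the damping $e^{-a_1(t-\tau)}$. The paper handles $\delta_1$ separately by $L^\infty$ contractivity, giving $D_1/a_1$, while you fold it into the $L^k$ forcing; both work.

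The two routes differ in the bound on $v$. You estimate the source $u^rv^{-s}$ over the whole range of $v$. For $s>0$ this forces you to use $v\ge\eta_0e^{-a_2T}$, so your constant $C_T$ grows exponentially in $T$ and you only get a bound on each finite interval. The paper instead tests the reaction at a single level. Take $U$ the uniform bound on $u$, $D_2=\|\delta_2\|_\infty$ and $V=\max\{\|v_0\|_\infty,(4U^r/a_2)^{1/(s+1)},4D_2/a_2\}$. Then $-a_2V+U^rV^{-s}+D_2\le-a_2V/2<0$ for every $s>-1$, because $V^{s+1}\ge 4U^r/a_2$. So the constant $V$ is a supersolution and $v\le V$ on all of $[0,T_{\max})$, with no case split on the sign of $s$ and no dependence on $T$.

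For the lemma as stated (finite-time bounds, hence $T_{\max}=\infty$) your route is enough. However, the time-independent $V$ is what the paper needs when it later invokes this lemma to conclude \emph{uniform} boundedness in Theorems~\ref{thm:source} and~\ref{thm:tc}. Your argument gives that only when $-1<s\le 0$.

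One step needs repair. Your single comparison ODE $y'=-a_2y+C_T(1+y^{|s|})+\|\delta_2\|_\infty$ is not sublinear when $s>1$, since then $y^{|s|}=y^{s}$. In that case it can blow up in finite time, so the claim that ``in either case \dots\ the growth is sublinear'' is false as written. For $s\ge 0$ you have already bounded the source by the constant $C_T$, so compare instead with $y'=-a_2y+C_T+\|\delta_2\|_\infty$. This gives $v\le\max\{\|v_0\|_\infty,(C_T+\|\delta_2\|_\infty)/a_2\}$ on $[0,T]$. Keep the $y^{|s|}$ term only for $-1<s<0$, where $|s|<1$.
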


{\bf Proof:}
Let $\zeta=N/(2k)<1$ and $D_i=\|\delta_i\|_{\infty}$. By the classical theory of parabolic equation\cite[Lemma 1.3 and its proof]{Winkler2010}, we have
$$
\|e^{d_1\tau\Delta_N}f\|_{\infty}
\le C_{\Omega,k}[1+(d_1\tau)^{-\zeta}]\|f\|_k.
$$
Here $\Delta_N$ is the Laplace operator subject to Neumann boundary condition.  By Duhamel principle and $L^\infty$ contractivity for the source term, we get
\begin{equation}\label{eq:U}
\|u(t)\|_\infty\le U:=\|u_0\|_\infty+\frac{D_1}{a_1}
+C_{\Omega,k}C_G\left[\frac1{a_1}
+d_1^{-\zeta}a_1^{\zeta-1}\Gamma_{\rm Euler}(1-\zeta)\right].
\end{equation}
Set
\begin{equation}\label{eq:V}
V=\max\left\{\|v_0\|_\infty,
\left(\frac{4U^r}{a_2}\right)^{1/(s+1)},\frac{4D_2}{a_2}\right\}.
\end{equation}
It easy to verify that
$$
-a_2V+U^rV^{-s}+D_2\le-a_2V/2<0,
$$ 
which implies that the constant $V$ is a supersolution for the scalar equation with reaction $-a_2z+U^rz^{-s}+D_2$, whereas $v$ is a subsolution. Hence $v\le V$

The constants $U,V$ do not depend on $T<T_{\max}$. If $T_{\max}<\infty$,  by these upper bounds and the standard classical continuation alternative, we get a contradiction to maximality. \hfill $\Box$

{\bf Remark 2.1:} In the right hand of $\sup_{t<T_{\max}}\|G(t)\|_{L^k}\le C_G$, $C_G$ can be a time-dependent function which keeps boundeness in any finite time while it possibly tends to infinite as time tends to infinite, or it may be a constant independent of time.

Using Lemma 2.1, we can get the following result on global existence of the solution to (\ref{eq:pde}).

\begin{theorem}
Under the assumptions of (\ref{zijia1}), $\delta_1(x), \delta_2(x)\geq 0$, $u_0(x), v_0(x)>0$ for every $x\in \Omega$, and $\frac{\partial u_0}{\partial \nu}=\frac{\partial v_0}{\partial \nu}=0$ on $\partial \Omega$, there exists a unique global positive classical solution to (\ref{eq:pde}). 
\end{theorem}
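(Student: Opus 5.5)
Local existence, uniqueness and positivity are standard: since $u_0,v_0\in C^{2+\alpha}(\overline\Omega)$ are strictly positive and satisfy the compatibility conditions, the reaction terms are smooth near the positive initial data. Classical parabolic theory therefore gives a unique positive classical solution on a maximal interval $[0,T_{\max})$. Along this solution the lower bounds (\ref{eq:finite-positive}) hold, so $v$ stays away from zero on every finite time interval. By Lemma \ref{lem:transfer} and Remark 2.1, it then suffices to fix some $k>\max\{1,N/2\}$ and show that $\|G(t)\|_{L^k}$ is bounded on $[0,T]\cap[0,T_{\max})$ for every finite $T$, with a bound allowed to depend on $T$.

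To get this bound I will use the functional $E(t)=\int_\Omega u^m v^{-n}\,dx$ and the identity (\ref{eq:identity}). I choose $n>M$ and $m>1$ with $m=pk$ and $n=qk$, so that $G^k=u^{m}v^{-n}$ and $E=\|G\|_k^k$. Since the ratio $n/m=q/p=c$ is forced by this choice, I scale both $m$ and $n$ up together, which keeps $n>M$. In (\ref{eq:identity}) the forcing terms are handled as follows. The term with $-n\delta_2$ is nonpositive and can be dropped. The $\delta_1$ term I bound by Young's inequality, $u^{m-1}v^{-n}\le \frac{m-1}{m}u^mv^{-n}+\frac1m v^{-n}$, and then use $v^{-n}\le \eta_0^{-n}e^{na_2t}$. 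The reaction term is controlled by (\ref{eq:scalar-max}): $\int u^mv^{-n}(mA^\theta-nA)\le m\Phi_\theta(n/m)E$. Here the critical relation $qr=(p-1)(s+1)$ is exactly what makes the growth term $u^{p-1}v^{-q}$ equal to $A^\theta$ and comparable with the damping term $A$. That gives a linear differential inequality
$$E'\le -D_{m,n}+C_1E+C_2e^{na_2t},$$
and Gronwall's lemma then yields a finite-time bound on $E$ as long as $D_{m,n}\ge0$.

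The main obstacle is the sign of the gradient term $D_{m,n}$, that is, positive semidefiniteness of $Q$. This requires
$$m(m-1)n(n+1)d_1d_2\ge m^2n^2(d_1+d_2)^2/4,$$
which is equivalent to $(m-1)(n+1)\ge Kmn$. Along the ray $n=cm$ this reads $(1-1/m)(1+1/n)\ge K$. For $K>1$, i.e.\ $d_1\ne d_2$, this fails for large $m$. My first attempt to get around it is to decouple the exponents from $G$: I will take general $(m,n)$ with $m$ near $1$ satisfying $(m-1)(n+1)\ge Kmn$, which is possible when $n$ is small relative to $1/(m-1)$, close to the curve $n\approx (m-1)/(K-(m-1))$. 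I would then recover an $L^k$ bound on $G=u^pv^{-q}$ by interpolation: $u^{pk}v^{-qk}\le u^{m}v^{-n}\cdot u^{pk-m}v^{n-qk}$, and the extra factor $v^{n-qk}$ is controlled from below by (\ref{eq:finite-positive}). The leftover power of $u$ is the problem, and it must be handled using a bound on $\int u$ or $\int u^{m'}$ that does not involve $v$ from above.

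If the interpolation route fails, the fallback is a Duhamel bootstrap in the $u$-equation using $G\le u^p\eta_0^{-q}e^{qa_2t}$ over short time intervals. This gives only local control, however, and genuinely needs the quotient structure. I expect the main difficulty of the proof to lie in reconciling the positive-definiteness constraint with $k>N/2$. I would first attempt the case where $K$ is close to $1$, and then handle general $d_1,d_2$ by choosing $m,n$ along the admissible curve and showing that the critical relation allows an $L^k$ bound with some $k>N/2$.
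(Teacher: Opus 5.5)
Your proposal has a genuine gap: neither of your routes closes. Along the ray $n=cm$ the matrix $Q$ is indefinite for large $m$ when $K>1$, as you note. Your second route, with $m$ near $1$, leaves the factor $u^{pk-m}$ with no available control. The Duhamel fallback is only local.

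The missing observation is that the admissible set for $Q\ge0$ is not confined to $m$ near $1$. Indeed
\[
(m-1)(n+1)\ge Kmn\iff n\bigl(K+(K-1)(m-1)\bigr)\le m-1\iff n\le\frac{m-1}{(K-1)m+1}.
\]
Your curve $(m-1)/(K-(m-1))$ has a sign error in the denominator. For $K>1$ the right-hand side increases to $1/(K-1)$ as $m\to\infty$. So you may fix any $0<n<1/(K-1)$ (any $n>0$ if $K=1$) and then take every $m>(n+1)/(1-n(K-1))$. The $u$-weight can be made as large as you like while the $v$-weight stays small.

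Note also that the integrability requirement falls on $m$, not on $n$. What Lemma~\ref{lem:transfer} needs is $k=m/p>\max\{1,N/2\}$, i.e. $m>M$. Insisting on $n>M$, as you do, would make the admissible set empty whenever $M\ge 1/(K-1)$.

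With this choice, take $m=pk$ exactly, so your interpolation identity has no leftover power of $u$. For $m$ large you also have $n<cm=qk$, so the extra $v$-factor is a negative power. Assuming $T_{\max}<\infty$, \eqref{eq:finite-positive} gives
\[
\|G(t)\|_k^k=\int_\Omega u^{m}v^{-n}\,v^{-(cm-n)}\,dx\le \eta_T^{-(cm-n)}E(t),\qquad \eta_T=\eta_0e^{-a_2T_{\max}}.
\]
Now use your differential inequality with $D_{m,n}\ge0$, the $\delta_2$-term dropped, $u^{m-1}\le 1+u^m$ and $v\ge\eta_T$. This gives $E'\le C_1E+C_0$, so $\sup_{t<T_{\max}}E(t)<\infty$ by Gronwall. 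Hence $\sup_{t<T_{\max}}\|G(t)\|_k<\infty$ with $k>\max\{1,N/2\}$, and Lemma~\ref{lem:transfer} contradicts $T_{\max}<\infty$.

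This is precisely the paper's argument. The point you missed is that lowering the $v$-exponent below $qk$ is harmless on finite time intervals, because the surplus is a negative power of $v$ controlled by the finite-time lower bound. So positive semidefiniteness of $Q$ and $k>N/2$ never actually compete.
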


{\bf Proof:} 
Since $u_0(x), v_0(x)>0$, the local classical solvability and its continuation of the solution to (\ref{eq:pde}) are the direct results of the usual local wellposedness theorem \cite{Pazy1983}.

Suppose $T_{\max}<\infty$. We first choose suitable $m, n$ in $E$.  If $K>1$, we can take any $0<n<1/(K-1)$ and then an integer
$$
m>\max\left\{M,\frac nc,\frac{n+1}{1-n(K-1)}\right\}.
$$
If $K=1$, we can take any $n>0$ and an integer $m>\max\{M,n/c,n+1\}$. In the two cases, we can guarantee 
$$
0<n<cm,\qquad mnK<(m-1)(n+1), \qquad \frac{m}{p}>\max\{1,N/2\}.
$$
Let $\eta_T=\eta_0e^{-a_2T_{\max}}>0$. Then \eqref{eq:finite-positive} implies  $v\ge\eta_T$ for every $t<T_{\max}$.   Using \eqref{eq:scalar-max} and $u^{m-1}\le1+u^m$,  we can obtain 
$$
E'\le C_1E+C_0,
\quad C_0=m\|\delta_1\|_\infty|\Omega|\eta_T^{-n},
\quad C_1=|na_2-ma_1+m\Phi_{\theta}(\frac{n}{m})|+m\|\delta_1\|_\infty
$$
from \eqref{eq:identity}. Applying Gronwall inequality, we get
 $$\sup_{t<T_{\max}}E(t)<\infty.$$
 Let $k=\frac{m}{p}$ and $g=cm-n>0$, we have
$$
\|G(t)\|_k^k=\int_{\Omega} u^mv^{-n}v^{-g}\le\eta_T^{-g}E(t).
$$
However, the inequality above implies that the solution is global, which is a contradiction. \hfill $\Box$

Furthermore, we can establish the results on uniform boundedness of the solution below. We will discuss it in two cases: 
{\bf Case 1.} At least one of $\delta_1(x)$ and $\delta_2(x)$ is nontrivial; {\bf Case 2.} $\delta_1(x)=\delta_2(x) \equiv 0$ for any $x\in \Omega$.

In order to establish uniform boundedness result, we need the following lemma.

\begin{lemma}\label{lem:floor}
If at least one of $\delta_1(x)$ and $\delta_2(x)$ is nontrivial, then there is a constant $\eta_v>0$ such that $v(x,t)\ge\eta_v$ for all $x\in \bar{\Omega}$ and $t<T_{\max}$. 
\end{lemma}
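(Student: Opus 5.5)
The plan is to obtain the uniform positive floor for $v$ by comparison with solutions of linear (or scalar) problems that do not see the nonlinear coupling. The key observation is that the reaction terms $u^p/v^q$ and $u^r/v^s$ are nonnegative along the positive classical solution. I will split into two subcases: $\delta_2\not\equiv0$, and $\delta_2\equiv0$ with $\delta_1\not\equiv0$. Throughout, $\eta_0=\min v_0>0$ as in \eqref{eq:finite-positive}.

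\textbf{Step 1: a uniform floor for a forced linear heat equation.} Let $w$ solve $w_t=d\Delta w-aw+\delta(x)$ with Neumann condition, $w(\cdot,0)=w_0\ge \mu>0$, where $\delta\ge0$ is continuous and $\delta\not\equiv0$. I will show $w\ge \eta_*>0$ for all $t\ge0$, with $\eta_*$ depending only on $d,a,\mu,\delta,\Omega$. By Duhamel,
$$w(t)=e^{-at}e^{td\Delta_N}w_0+\int_0^t e^{-a(t-\tau)}e^{(t-\tau)d\Delta_N}\delta\,d\tau .$$
For $t\le1$ the first term is at least $e^{-a}\mu$, since the Neumann semigroup preserves constants and positivity. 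For $t\ge1$, I keep only the part of the integral with $t-\tau\in[1/2,1]$. The Neumann heat kernel on a smooth bounded connected domain satisfies $p(\sigma,x,y)\ge c_0>0$ for $\sigma\in[1/2,1]$. Hence this part is at least $\tfrac12 e^{-a}c_0\int_\Omega\delta\,dx>0$. Taking the minimum of the two bounds gives $\eta_*$. The main technical point is citing this strictly positive lower bound for the heat kernel on a compact time interval. Alternatively, I can argue that $w\to h$ uniformly, where $h>0$ on $\overline\Omega$ is the stationary solution (strong maximum principle plus Hopf lemma), and combine this with positivity on finite time intervals.

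\textbf{Step 2: the case $\delta_2\not\equiv0$.} Since $u^r/v^s\ge0$, $v$ is a supersolution of $w_t=d_2\Delta w-a_2w+\delta_2$. I take $w(\cdot,0)=v_0$, so $\mu=\eta_0$. The parabolic comparison principle (linear, Neumann) gives $v\ge w\ge\eta_*=:\eta_v$ for all $t<T_{\max}$.

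\textbf{Step 3: the case $\delta_2\equiv0$, $\delta_1\not\equiv0$.} Since $u^p/v^q\ge0$, the same argument applied to $u$ gives $u\ge\eta_u>0$ for all $t<T_{\max}$, using $\min u_0>0$. Then $v$ satisfies $v_t\ge d_2\Delta v-a_2v+\eta_u^r v^{-s}$. Let $z(t)$ solve the ODE $z'=-a_2z+\eta_u^rz^{-s}$ with $z(0)=\eta_0$. Because $s+1>0$, the right-hand side is positive for $z<z_e:=(\eta_u^r/a_2)^{1/(s+1)}$. Therefore $z(t)\ge\min\{\eta_0,z_e\}$ for all $t$.

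The spatially constant $z$ is a subsolution of the scalar problem with reaction $f(v)=-a_2v+\eta_u^rv^{-s}$. This reaction is locally Lipschitz on $(0,\infty)$, and both $v$ and $z$ stay in a compact subset of $(0,\infty)$ on each $[0,T]$, by \eqref{eq:finite-positive} and the upper bounds from Theorem 2.1. So comparison applies and yields $v\ge z\ge\eta_v:=\min\{\eta_0,(\eta_u^r/a_2)^{1/(s+1)}\}$.

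In both cases the constant is independent of $t$ and of $T_{\max}$, which proves Lemma \ref{lem:floor}. I expect the only delicate point to be the uniform-in-time positivity in Step 1. The comparison steps are routine.
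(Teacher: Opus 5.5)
Your proof is correct. Your Step 3 is essentially the paper's Subcase 2: the paper solves the Bernoulli ODE explicitly through $\rho^{s+1}$, while you use a phase-line argument, and both give the floor $\min\{\eta_0,(u_*^r/a_2)^{1/(s+1)}\}$. Your remark that the reaction is locally Lipschitz on the range of $v$ and $z$ fills in a point the paper leaves implicit.

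The genuine difference is in the case $\delta_2\not\equiv 0$, and likewise in how you get $u\ge u_*$ when $\delta_1\not\equiv 0$. The paper avoids any parabolic kernel estimate. It takes the stationary response $h_2=(-d_2\Delta_N+a_2)^{-1}\delta_2$, which is strictly positive on $\overline\Omega$ by the elliptic strong maximum principle and Hopf lemma. It then observes that $b_2h_2$ with $b_2=\min\{1,\min(v_0/h_2)\}$ is a time-independent subsolution, because $b_2\delta_2\le\delta_2$ and $b_2h_2\le v_0$. Linear comparison then gives $v\ge b_2\min h_2$ in one line.

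Your Duhamel argument instead needs a uniform positive lower bound for the Neumann heat kernel on $[1/2,1]\times\overline\Omega\times\overline\Omega$. That bound is true for a smooth bounded connected domain, but it is heavier to justify. It yields a floor of the form $\min\{e^{-a}\eta_0,\tfrac12 e^{-a}c_0\int_\Omega\delta\,dx\}$.

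Your route buys robustness: it would go through for time-dependent or merely integrable nonnegative forcing, where no stationary comparison function is available. The paper's route buys elementarity and an explicit constant $\eta_v=\vartheta\min h_2$. That constant is built from the same $h_2$ that later serves as the limit profile of $v$ in Theorem~\ref{d4:thm:source}, where the paper reuses it directly. Your fallback idea ($w\to h$ uniformly) is closer in spirit to the paper, but the scaling-by-$b_2$ trick makes even that convergence argument unnecessary.
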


{\bf Proof:} We prove it in two subcases. Subcase 1. $\delta_2\not\equiv 0$. Let $h_2=(-d_2\Delta_N+a_2)^{-1}\delta_2$.The strong elliptic maximum principle implies $h_2>0$ on $\overline\Omega$. Let $b_2=\min\{1,\min(v_0/h_2)\}>0$. Then $b_2h_2$ is a subsolution of the linear inequality for $v$, which means $v\ge b_2h_2$.

Subcase 2. $\delta_1\not\equiv0$. Similar to the discussion in Subcase 1, we have $u\ge u_*>0$ for some constant $u^*>0$. Compare $v$ with 
$$\rho'=-a_2\rho+u_*^r\rho^{-s}, \qquad \rho(0)=\eta_0.$$ Since $s>-1$ and $s+1>0$, we obtain
$$
\rho(t)^{s+1}=\eta_0^{s+1}e^{-(s+1)a_2t}
+\frac{u_*^r}{a_2}(1-e^{-(s+1)a_2t}),
$$
which implies that  $v\ge\rho\ge\min\{\eta_0,(u_*^r/a_2)^{1/(s+1)}\}>0$.\hfill $\Box$

The following two theorems will deal with the boundedness of the solution to the system in different cases.

{\bf Case 1.} At least one of $\delta_1(x)$ and $\delta_2(x)$ is nontrivial.

\begin{theorem}\label{thm:source}
Assume that at least one of $\delta_1(x)$ and $\delta_2(x)$ is nontrivial. Choose
\begin{equation}\label{eq:closed-weights}
m>M,\qquad 0<n\le cm,\qquad mnK\le(m-1)(n+1).
\end{equation}
If $a_1>H(\frac{n}{m})$, the solution is global and uniformly bounded. Especially, if $\delta_1\equiv 0$ and $\delta_2\not\equiv0$, the sufficient condition can be relaxed as $a_1\ge H(\frac{n}{m})$. 
\end{theorem}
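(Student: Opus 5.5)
The plan is to run the energy method of the global existence theorem again, now with constants that do not depend on time, and then feed a uniform $L^k$ bound on $G$ into Lemma \ref{lem:transfer}. By Remark 2.1 the transfer lemma gives time-independent bounds $U,V$ once $C_G$ is a constant.

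\emph{Step 1: dissipation.} Take $m,n$ as in \eqref{eq:closed-weights}. The condition $mnK\le (m-1)(n+1)$ is the same as
\[
m^2n^2(d_1+d_2)^2/4\le m(m-1)n(n+1)d_1d_2,
\]
that is, $\det Q\ge 0$, and the diagonal entries of $Q$ are positive. So $Q$ is positive semidefinite and $D_{m,n}\ge 0$.

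\emph{Step 2: reaction term.} By \eqref{eq:scalar-max},
\[
\int_\Omega u^mv^{-n}(mA^\theta-nA)\,dx\le m\Phi_\theta(n/m)E .
\]
Since $m\Phi_\theta(n/m)+na_2=mH(n/m)$, \eqref{eq:identity} becomes
\[
E'\le -m\bigl(a_1-H(n/m)\bigr)E+m\int_\Omega\delta_1u^{m-1}v^{-n}\,dx-n\int_\Omega\delta_2u^mv^{-n-1}\,dx .
\]
The $\delta_2$ term is $\le 0$ and is dropped. For the $\delta_1$ term, Lemma \ref{lem:floor} gives $v\ge\eta_v$. Young's inequality then gives
\[
u^{m-1}v^{-n}=(u^mv^{-n})^{(m-1)/m}v^{-n/m}\le \varepsilon u^mv^{-n}+C_\varepsilon\eta_v^{-n}.
\]
Choose $\varepsilon$ so small that $m\|\delta_1\|_\infty\varepsilon<\tfrac m2(a_1-H)$. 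This gives $E'\le -\tfrac m2(a_1-H)E+C$, hence $\sup_tE(t)<\infty$ uniformly in time.

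\emph{Step 2, special case.} If $\delta_1\equiv0$, the $\delta_1$ term vanishes and $E'\le -m(a_1-H)E\le 0$. So $E(t)\le E(0)$ even when $a_1=H(n/m)$, which is the relaxed condition.

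\emph{Step 3: transfer.} Put $k=m/p>\max\{1,N/2\}$, which holds since $m>M$. Put $g=cm-n\ge 0$. Using $v\ge\eta_v$,
\[
\|G\|_k^k=\int_\Omega u^mv^{-n}v^{-g}\,dx\le \eta_v^{-g}E(t),
\]
and if $g=0$ no lower bound is needed at all. Lemma \ref{lem:transfer} then yields the constants $U,V$, independent of $t$. So the solution is global, by the previous theorem, and uniformly bounded.

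The step I expect to be the main obstacle is the absorption of the $\delta_1$ term in Step 2. It needs the time-uniform floor for $v$ from Lemma \ref{lem:floor}, which is why a nontrivial source is assumed. It also needs the strict margin $a_1>H$, and this explains why equality is allowed only when $\delta_1\equiv0$.
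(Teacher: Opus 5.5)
Your proposal is correct and follows essentially the same route as the paper. Both use the energy inequality from \eqref{eq:identity} together with \eqref{eq:scalar-max}, absorb the $\delta_1$ term by Young's inequality and the floor $\eta_v$ of Lemma~\ref{lem:floor} (or get $E'\le 0$ directly when $\delta_1\equiv0$), and feed $\|G\|_k^k\le\eta_v^{-g}E$ with $k=m/p$ into Lemma~\ref{lem:transfer}; your explicit check that $mnK\le(m-1)(n+1)$ makes $Q$ positive semidefinite, so $D_{m,n}\ge0$, is a step the paper uses only implicitly.
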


{\bf Proof:} Denote $\sigma=m[a_1-H(\frac{n}{m})]$ and $D_1=\|\delta_1\|_{\infty}$. By \eqref{eq:identity} and the definition of $H$, we get
$$
E'\le-\sigma E+mD_1\int_{\Omega} u^{m-1}v^{-n}dx.
$$
Young's inequality implies that
$$
u^{m-1}\le\varepsilon u^m+C_{m,\varepsilon},\qquad
C_{m,\varepsilon}=\frac{(m-1)^{m-1}}{m^m\varepsilon^{m-1}}
$$
for every $\varepsilon>0$ and every real $m>1$.

If $D_1=0$ and $\sigma\ge 0$, then $E'(t)\leq 0$ and $E(t)\le E(0)$.

If $D_1>0$ and $\sigma>0$,  we can take $\varepsilon=\sigma/(2mD_1)$, then use Lemma~\ref{lem:floor} to get
$$
E'\le-\frac\sigma2E+C_0,\qquad
\tilde{C}_0=mD_1C_{m,\varepsilon}|\Omega|\eta_v^{-n},
$$
consequently, $E(t)\le\max\{E(0),2\tilde{C}_0/\sigma\}$. 

Setting $k=\frac{m}{p}$ and $g=cm-n\ge 0$, we have
\begin{equation}\label{eq:source-forcing}
\|G\|_k^k=\int_{\Omega} u^mv^{-cm}\le\eta_v^{-g} E.
\end{equation}
Applying Lemma~\ref{lem:transfer}, we prove the conclusion. \hfill $\Box$

{\bf Case 2.} $\delta_1(x)=\delta_2(x)\equiv0$ for every $x\in \Omega$.

\begin{theorem}\label{thm:tc}
Assume that $\delta_1(x)=\delta_2(x)\equiv 0$ for every $x\in \Omega$, (\ref{zijia1}) holds and the positive initial datum $u_0$ and $v_0$ which satisfy compatible condition $\frac{\partial u_0}{\partial \nu}=\frac{\partial v_0}{\partial \nu}=0$. If $\mu=a_1-a_2c-\Phi_{\theta}(\frac{n}{m})\geq 0$, then the solution to (\ref{eq:pde}) is boundedness.  
\end{theorem}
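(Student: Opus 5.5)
The plan is to run the energy method of Theorem~\ref{thm:source}, but with the weights chosen so that the lower bound for $v$ from Lemma~\ref{lem:floor} is never needed. Without sources, $v$ may decay in time, so no such floor is available here. Concretely, I take $m,n$ as in \eqref{eq:closed-weights} with the borderline choice $n=cm$. This is the choice implicit in the definition of $\mu$: $\Phi_{\theta}$ is evaluated at $n/m=c$, and the term $a_2c$ is exactly $a_2 n/m$. In other words, I assume
$$
m>M,\qquad n=cm,\qquad mnK\le (m-1)(n+1).
$$
With these weights the exponent gap $g=cm-n$ vanishes. The reason for this choice is that then
$$
\|G\|_k^k=\int_\Omega u^mv^{-cm}\,dx=E(t),\qquad k=m/p>\max\{1,N/2\},
$$
so an $L^k$ bound on the forcing $G$ follows from a bound on $E$ alone.

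First step. Since $\delta_1=\delta_2\equiv0$, the identity \eqref{eq:identity} reduces to
$$
E'(t)=-D_{m,n}+(na_2-ma_1)E+\int_\Omega u^mv^{-n}(mA^\theta-nA)\,dx.
$$
The condition $mnK\le(m-1)(n+1)$ is exactly $\det Q\ge0$, since
$$
\left(\tfrac{mn(d_1+d_2)}{2}\right)^2=mnK\cdot mn\,d_1d_2 .
$$
Together with the positive diagonal entries of $Q$, this makes $Q$ positive semidefinite, hence $D_{m,n}\ge0$.

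Second step. By \eqref{eq:scalar-max},
$$
E'\le \big(na_2-ma_1+m\Phi_\theta(n/m)\big)E=m\big(ca_2-a_1+\Phi_\theta(c)\big)E=-m\mu E\le0 .
$$
Hence $E(t)\le E(0)=\int_\Omega u_0^mv_0^{-cm}\,dx<\infty$ for all $t$, and this bound is independent of time.

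Third step. From the bound on $E$ we get $\sup_t\|G(t)\|_{L^k}\le E(0)^{1/k}$. Lemma~\ref{lem:transfer} then gives time-independent bounds $u\le U$ and $v\le V$, through \eqref{eq:U} and \eqref{eq:V} with $D_1=D_2=0$. Note that the supersolution argument for $v$ uses only $s+1>0$ and not any lower bound on $v$. Global existence is already known from the global existence theorem, so this yields uniform boundedness.

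The main obstacle is compatibility of the weights. Requiring $n=cm$ and $m>M$ simultaneously with $mnK\le(m-1)(n+1)$ reads
$$
c\,m^2(K-1)\le m(1-c)-1 .
$$
When $K=1$ (that is, $d_1=d_2$) this holds for every large $m$, because $c=q/p<1$. The latter follows from $q=\theta(s+1)$ together with the relation $qr=(p-1)(s+1)$ and $p-1<r$; I would verify this directly, and if it fails, work instead with the constraint stated in the theorem. When $K>1$ the left side grows like $m^2$, so the constraint forces $d_1/d_2$ to be close to $1$ relative to $c$ and $M$. I would therefore state the theorem under \eqref{eq:closed-weights} with $n=cm$ as a standing hypothesis. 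If one insists on $n<cm$ instead, the term $v^{-g}$ cannot be controlled without a floor. In that case the fallback is to bound $E$ as above and interpolate $\int u^mv^{-cm}$ between two admissible pairs $(m,n)$. I expect this interpolation step to be the delicate point, and would try it only if the borderline choice $n=cm$ is not admissible.
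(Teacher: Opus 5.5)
Your argument is valid only for the borderline weight $n=cm$. That restriction is a genuine gap, not a normalization. In the statement, $\mu=a_1-a_2c-\Phi_{\theta}(n/m)$ evaluates $\Phi_\theta$ at $n/m$, not at $c$. The theorem is meant for any $(m,n)$ satisfying \eqref{eq:closed-weights}, i.e.\ $0<n\le cm$, and the mismatch between $a_2c$ and $a_2n/m$ is deliberate.

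The choice $n=cm$ is often not admissible:
\begin{itemize}
\item Since $K\ge1$, the conditions $n=cm$ and $mnK\le(m-1)(n+1)$ force $m\ge cm+1$, which is impossible when $c\ge1$.
\item The inequality $c<1$ does \emph{not} follow from \eqref{zijia1}. Take $p=r=2$, $s=9$, $q=5$: then $qr=(p-1)(s+1)=10$ and $p-1<r$, but $c=5/2$.
\item It also fails for unequal diffusions. In Example~1 ($K=25/9$, $c=1/2$), your constraint $cm^2(K-1)\le m(1-c)-1$ reads $\frac89m^2-\frac m2+1\le0$, which has no solution. The paper instead uses $(m,n)=(3,1/4)$ with $n<cm=3/2$.
\end{itemize}
So your proof covers only a subcase of the theorem, and for many parameters it covers nothing.

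The missing idea is that $v^{-g}$ \emph{can} be controlled without a uniform floor. The bound $v\ge\eta_0e^{-a_2t}$ in \eqref{eq:finite-positive} holds without sources. Its exponential loss is absorbed by the weighted functional $\widehat E(t)=e^{a_2gt}E(t)$ with $g=cm-n\ge0$. Using $D_{m,n}\ge0$ and \eqref{eq:scalar-max},
$$
\widehat E'\le e^{a_2gt}\bigl[na_2-ma_1+m\Phi_{\theta}(n/m)+a_2(cm-n)\bigr]E=-m\mu\widehat E\le0,
$$
so $\widehat E(t)\le E(0)$. Then
$$
\|G(t)\|_k^k=\int_{\Omega}u^mv^{-n}v^{-g}\,dx\le\eta_0^{-g}e^{a_2gt}E(t)=\eta_0^{-g}\widehat E(t)\le\eta_0^{-g}E(0).
$$
This is exactly why $\mu$ contains $a_2c$ rather than $a_2n/m$. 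Lemma~\ref{lem:transfer} then concludes as in your third step.

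The paper's displayed bound on $v^{-g}$ has the sign of the exponent flipped. The correct bound is $v^{-g}\le\eta_0^{-g}e^{a_2gt}$, which is what the chain above uses. For $n=cm$ this argument reduces to yours, and the interpolation fallback you sketch is unnecessary.
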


{\bf Proof:}  We can choose $m,n$ satisfy \eqref{eq:closed-weights}, let $k=m/p$, $g=cm-n\ge 0$ and 
$$
\widehat E=e^{a_2gt}E.
$$
Using (\ref{eq:pde}), we have 
\begin{equation}\label{eq:tc-estimates}
\widehat{E}'\le-m\mu\widehat{E}.
\end{equation}
Since $\mu\ge 0$, (\ref{eq:tc-estimates}) implies that $\widehat{E}(t)\leq \widehat{E}(0)=E(0)$.  
Recalling that the lower bound \eqref{eq:finite-positive} implies that $v^{-g}\le\eta_0^{-g}e^{-a_2gt}\leq \eta_0^{-g} $ for $g\ge0$. Therefore
$$
\|G\|_k^k=\int_{\Omega} u^mv^{-n}v^{-g}dx\le\eta_0^{-g}\widehat{E}\le\eta_0^{-g}E(0).
$$
Applying Lemma~\ref{lem:transfer},  we obtain the uniform boundedness of the solution. \hfill $\Box$

In the case of $\delta_1=\delta_2\equiv0$, for some special $m,n$, we can relax the condition on $a_1$ in Theorem \ref{thm:tc} and obtain the following results.

\begin{theorem}\label{thm:companion}
Let $\delta_1=\delta_2\equiv0$. Take $m>M$ and $n=cm$. If $mnK\le(m-1)(n+1)$ and
$$
L=\frac{m(s+1)}{r}-n=\frac{mq}{p(p-1)}\ge 1, 
$$
then the solution is global and uniformly bounded if
\begin{equation}\label{eq:companion-damping}
a_1>H(\min\{c,y_*\}),\qquad y_*=\theta a_2^{\theta-1}.
\end{equation}
Moreover, there exist $\xi,\omega>0$, depending only on the chosen $m,n$ and fixed coefficients in equations, such that
\begin{equation}\label{eq:comp-decay}
F_{\xi}(t):=E(t)+\xi\int_{\Omega} v^L(x,t)dx
\le F_{\xi}(0)e^{-\omega t}
\end{equation}
and
 $$\|G(t)\|_{L^{m/p}}^{m/p}\le F_{\xi}(0)e^{-\omega t}.
 $$ 
\end{theorem}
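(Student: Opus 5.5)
The plan is to run the energy identity \eqref{eq:identity} with $n=cm$ and $\delta_1=\delta_2\equiv0$, and to add a multiple of $\int_\Omega v^L dx$ to absorb the part of the damping that the choice $y=\min\{c,y_*\}$ gives up. First, the condition $mnK\le(m-1)(n+1)$ gives $\det Q\ge0$. Indeed,
$$m(m-1)n(n+1)d_1d_2\ge m^2n^2(d_1+d_2)^2/4 \iff (m-1)(n+1)\ge mnK,$$
and the diagonal entries of $Q$ are positive, so $Q$ is positive semidefinite and $D_{m,n}\ge0$. Set $y=\min\{c,y_*\}\le c=n/m$. I then split
$$mA^\theta-nA=(mA^\theta-ymA)-m(c-y)A,\qquad na_2E=yma_2E+m(c-y)a_2E.$$
By \eqref{eq:scalar-max} with ratio $y$ in place of $n/m$, we have $mA^\theta-ymA\le m\Phi_\theta(y)$. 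Hence
$$E'\le -m\bigl(a_1-H(y)\bigr)E+m(c-y)a_2E-m(c-y)\int_\Omega u^mv^{-n}A\,dx .$$
The middle term is the bad one, and it is absent when $y=c$.

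Second, I compute the evolution of $\int_\Omega v^L dx$:
$$\frac{d}{dt}\int_\Omega v^L=-L(L-1)d_2\int_\Omega v^{L-2}|\nabla v|^2-La_2\int_\Omega v^L+L\int_\Omega u^rv^{L-1-s}.$$
Since $L\ge1$, the gradient term is nonpositive. The key algebraic observation is that the relation $L=m(s+1)/r-n$ makes two Young splittings exact:
$$u^rv^{L-1-s}=(u^mv^{-n}A)^{\frac r{m+r}}(v^L)^{\frac m{m+r}},\qquad u^mv^{-n}=(u^mv^{-n}A)^{\frac m{m+r}}(v^L)^{\frac r{m+r}}.$$
One checks this by matching the exponents of $u$ and $v$; the $v$-exponent identity reduces in both cases to $rL=m(s+1)-rn$. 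Therefore, for every $\varepsilon>0$,
$$u^rv^{L-1-s}\le\varepsilon\,u^mv^{-n}A+C_\varepsilon v^L\quad\text{and}\quad u^mv^{-n}\le\varepsilon\,u^mv^{-n}A+C_\varepsilon v^L.$$

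Third, I combine the estimates for $F_\xi=E+\xi\int_\Omega v^L$. Set $\sigma=m(a_1-H(y))>0$. I use the second splitting on only half of $E$ and with a small $\varepsilon_1$, so that $\tfrac{\sigma}{2}E+m(c-y)a_2E$ is controlled by $-\tfrac{\sigma}{2}E$ plus $\tfrac{m(c-y)}{2}\int u^mv^{-n}A$ plus $C_1\int v^L$; this is possible because $\varepsilon_1$ can be taken as small as needed. Next I choose $\xi$ so large that $\xi La_2/2\ge C_1$. Finally I choose $\varepsilon_2$ so small that $\xi L\varepsilon_2\le m(c-y)/2$ and $\xi LC_{\varepsilon_2}$ is...

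Here the naive order fails: once $\xi$ is fixed, making $\varepsilon_2$ small makes $C_{\varepsilon_2}$ large, and that term competes with $\xi La_2$. This ordering of constants is the main obstacle. What I would try first is to rescale instead of shrinking $\varepsilon_2$. Since the $v^L$ source term in $\frac{d}{dt}\int v^L$ carries the factor $\xi L$, I write $\xi Lu^rv^{L-1-s}=(\xi Lu^mv^{-n}A)^{\frac r{m+r}}(\xi Lv^L)^{\frac m{m+r}}$ and apply weighted Young with weights tuned so that the coefficient of $\int u^mv^{-n}A$ is at most $m(c-y)/2$. The coefficient of $\int v^L$ then scales like $\xi^{(m+r)/m}$ times a constant, which is superlinear in $\xi$, so large $\xi$ does not help directly.

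If that fails, I would exploit the freedom in the second splitting instead. Taking $\varepsilon_1$ small forces $C_1$ to be large, but the constraint on $\xi$ is only $\xi\gtrsim C_1$, while the source term needs $\xi^{(m+r)/m}\lesssim \xi a_2$, i.e. $\xi$ bounded above. I would try to choose $\varepsilon_1,\xi$ so that both constraints hold simultaneously. In the case $y=c$ no $v^L$ term is needed at all ($\xi$ can be taken small and the argument reduces to Theorem \ref{thm:tc}-type reasoning), so the real work is the case $y_*<c$, where I expect to have to use the exact margin $a_1-H(y_*)>0$ together with the fact that $y_*$ minimizes $H$.

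Once constants are fixed, I obtain $F_\xi'\le-\omega F_\xi$ with $\omega=\min\{\sigma/2,\,La_2/2\}$, hence \eqref{eq:comp-decay}. Since $n=cm$, we have $\|G\|_{m/p}^{m/p}=\int_\Omega u^mv^{-cm}=E\le F_\xi$, which gives the stated decay of $\|G(t)\|_{L^{m/p}}$. Because $m/p>\max\{1,N/2\}$, Lemma~\ref{lem:transfer}, with a time-independent $C_G$, then yields global existence and uniform boundedness.
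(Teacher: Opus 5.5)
\textbf{There is a genuine gap: the proposal stops at the step that carries the theorem, and the scheme you sketch cannot close it.}

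Your setup matches the paper's. With $\lambda=m(c-y)$, $J=\int_\Omega u^mv^{-n}A\,dx$ and $Y=\int_\Omega v^L\,dx$ you get $E'+\lambda J\le\beta E$ with $\beta=\lambda a_2-\sigma$. Your two exact splittings are the pointwise form of the paper's H\"older bounds $E\le J^bY^a$ and $\int_\Omega u^rv^{L-s-1}dx\le J^aY^b$, where $a=r/(m+r)$ and $b=m/(m+r)$.

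The halved scheme reserves $-\frac{\sigma}{2}E$ and $-\frac{\xi La_2}{2}Y$ and gives $\frac{\lambda}{2}J$ to each Young inequality. With sharp Young constants, $C_1+\xi LC_{\varepsilon_2}\le\xi La_2/2$ is achievable for some $\xi$ if and only if $\lambda a_2-\sigma/2\le\lambda a_2/(4a^ab^b)$. Since $4a^ab^b\ge2$, this forces $\beta\le0$, which is the regime where $E$ is already nonincreasing by itself. For $0<\beta<\lambda a_2$ the two interpolations must be balanced jointly.

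The paper does this by homogeneity. It writes $F_\xi'\le Y\bigl[\bar BX^b+\xi LX^a-\lambda X-\xi La_2\bigr]=-YN_\xi(\tau)$, where $X=J/Y$, $\tau=X^a$, $h=m/r$, $\bar B=\max\{\beta,0\}$ and $N_\xi(\tau)=\tau^h(\lambda\tau-\bar B)+\xi L(a_2-\tau)$. It then chooses $\xi$ in the two-sided window $\bar B^{h+1}/(La_2\lambda^h)<\xi<\lambda a_2^h/L$. This window is nonempty precisely because $\bar B<\lambda a_2$, i.e.\ $a_1>H(z_c)$. One checks $N_\xi>0$ on $[0,\infty)$ separately on $[\bar B/\lambda,a_2]$, $(0,\bar B/\lambda)$ and $(a_2,\infty)$. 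Then $\omega=\inf_{\tau\ge0}N_\xi(\tau)/(\tau^h+\xi)>0$, because the ratio tends to $La_2$ at $0$ and to $+\infty$ at $\infty$. This gives $F_\xi'\le-\omega(J^bY^a+\xi Y)\le-\omega F_\xi$.

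Your remark that $\xi$ must be bounded both above and below is the right instinct. Your pointwise identities would even let you run this one-variable argument on the integrand, with $\tau=(u^mv^{-n}A/v^L)^a$. But finding the window and verifying $N_\xi>0$ on it is the whole content, and your $\omega=\min\{\sigma/2,La_2/2\}$ is unjustified.

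\textbf{Your treatment of the case $y_*\ge c$ is also flawed.} There $y=\min\{c,y_*\}=c$, so $\lambda=0$. The only absorption you set up for $\xi L\int_\Omega u^rv^{L-1-s}dx$, namely the term $-\lambda J$, then disappears. The inequality $E'\le-\sigma E$ still yields the bound on $G$, but nothing in your argument controls $Y$. So \eqref{eq:comp-decay} does not follow by taking $\xi$ small.

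The paper avoids this by using the strict inequality in \eqref{eq:companion-damping} and the continuity of $H$ to pick $z_c\in(0,c)$ with $H(z_c)<a_1$. It then applies \eqref{eq:scalar-max} with $\widetilde n=mz_c$, so that $\lambda=n-mz_c>0$ in all cases.

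Your closing step is fine: $n=cm$ gives $\|G\|_{m/p}^{m/p}=E\le F_\xi$, and $m/p>\max\{1,N/2\}$ allows Lemma~\ref{lem:transfer} with a time-independent constant.
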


{\bf Proof:}
Since $\inf_{0<z<c}H(z)=H(\min\{c,y_*\})$, (\ref{eq:companion-damping}) means that there is a $z_c\in(0,c)$ satisfying $H(z_c)<a_1$. Set
\begin{align}
&\widetilde{n}=mz_c,\quad\lambda=n-\widetilde{n}>0,\quad
\beta=na_2-ma_1+m\Phi_{\theta}(z_c),\\
&J=\int_{\Omega} u^{m+r}v^{-n-s-1}dx,\quad Y=\int_{\Omega} v^Ldx.
\end{align}

Applying \eqref{eq:scalar-max} to $mA^\theta-\widetilde{n}A$ in \eqref{eq:identity}, we obtain
\begin{equation}\label{eq:retained}
E'+\lambda J\le\beta E\qquad
{\rm with}\quad \beta-\lambda a_2=m[H(z_c)-a_1]<0.
\end{equation}
For $a=r/(m+r)$ and $b=m/(m+r)$, using H\"older's inequality, we get
\begin{equation}\label{eq:Holder}
E\le J^bY^a,\qquad
\int_{\Omega} u^rv^{L-s-1}dx \le J^aY^b
\end{equation}
and
\begin{equation}\label{eq:moment}
Y'=-L(L-1)d_2\int_{\Omega} v^{L-2}|\nabla v|^2dx-La_2Y
+L\int_{\Omega} u^rv^{L-s-1}dx.
\end{equation}
Meanwhile, integrating the $v$ equation, we can get
$$
\left(\int_{\Omega} vdx\right)'=-a_2\int_{\Omega} vdx+\int_{\Omega} u^rv^{-s}dx.
$$
Combining the statements above, we have
$$Y'\le-La_2Y+LJ^aY^b.$$

Let $\bar{B}=\max\{\beta,0\}<\lambda a_2$, $h=m/r>0$, and choose
\begin{equation}\label{eq:xi}
\frac{\bar{B}^{h+1}}{La_2\lambda^h}<\xi<\frac{\lambda a_2^h}{L}.
\end{equation}
Denote $X=J/Y>0$. From \eqref{eq:retained}--\eqref{eq:moment},
\begin{align}
&F'_{\xi}\le Y[\bar{B}X^b+\xi LX^a-\lambda X-\xi La_2]=-YN_{
\xi}(\tau),\label{eq:Fprime}\\
&N_\xi(\tau)=\tau^h(\lambda\tau-\bar{B})+\xi L(a_2-\tau),\qquad \tau=X^a.\label{eq:Fupper}
\end{align}

We will prove $N_\xi(\tau)>0$ for all $\tau\ge 0$ below. (i) On $\bar{B}/\lambda\le \tau\le a_2$, both terms are nonnegative and they cannot vanish together; (ii) If $\bar{B}>0$ and $0<\tau<\bar{B}/\lambda$, with $\tilde{\nu}=\bar{B}/(\lambda a_2)<1$ and $\tilde{w}=\lambda\tau/\bar{B}\in(0,1)$,
$$
\frac{\tau^h(\bar{B}-\lambda\tau)}{a_2-\tau}
=\frac{\bar{B}^{h+1}}{a_2\lambda^h}
\frac{\tilde{w}^h(1-\tilde{w})}{1-\tilde{\nu} \tilde{w}}
<\frac{\bar{B}^{h+1}}{a_2\lambda^h}<\xi L.
$$
(iii) If $\tau>a_2$, set $\tilde{w}=\tau/a_2>1$ and $\tilde{\nu}=\bar{B}/(\lambda a_2)\in[0,1)$, then
$$
\frac{\tau^h(\lambda\tau-\bar{B})}{\tau-a_2}
=\lambda a_2^h\frac{\tilde{w}^h(\tilde{w}-\tilde{\nu})}{\tilde{w}-1}
>\lambda a_2^h>\xi L.
$$
In conclusion, $N_\xi(\tau)>0$ for all $\tau\ge 0$. Moreover,
$$
\frac{N_\xi(\tau)}{\tau^h+\xi}\longrightarrow La_2>0\quad(\tau\downarrow0),
\qquad
\frac{N_\xi(\tau)}{\tau^h+\xi}\longrightarrow+\infty\quad(\tau\to\infty).
$$
By Continuity, $\frac{N_{\xi}}{\tau^h+\xi}$ has a strictly positive infimum $\omega$. Hence \eqref{eq:Fprime}--\eqref{eq:Fupper} imply $F'_{\xi}\le-\omega F_{\xi}$ and $F_{\xi}(t)\leq F_{\xi}(0)e^{-\omega t}$. Similar to the discussions in the proof of Theorem 2.3, we can obtain 
$$\|G\|_{m/p}^{m/p}\leq  F_{\xi}(0)e^{-\omega t}.$$  \hfill $\Box$

\label{sec:decay-transfer}
The results obtained above can be summed as follows.
\begin{equation}\label{eq:forcing-table}
\|G(t)\|_k\le C_Ge^{-\beta t},\qquad
\begin{array}{c|c|c}
\text{branch}&C_G&\beta\\\hline
\delta_1=\delta_2=0,\ \mu>0&(\eta_0^{-g}E(0))^{1/k}&p\mu\\
\text{zero-source companion}&F_\xi(0)^{1/k}&\omega/k\\
\delta_1=0,\ \delta_2\not\equiv0,\ \tilde{\varepsilon}>0&(\eta_v^{-g}E(0))^{1/k}&p\tilde{\varepsilon}
\end{array}
\end{equation}
Here
\begin{equation}\label{d4:eq:mu}
\mu=a_1-a_2c-\Phi_{\theta}(\frac{n}{m})>0,
\end{equation}
and in the last row
\begin{equation}\label{d4:eq:source-margin}
\tilde{\varepsilon}=a_1-H(\frac{n}{m})>0.
\end{equation}

\section{Asymptotic behavior for the solution}
\qquad In this section, we consider the asymptotic behavior for the solution.  First we will deduce the decay rate of $u$ by that of $\|G\|_k$ . 

\begin{lemma}\label{d4:lem:transfer}
Assume $\delta_1(x)\equiv0$ and $\|G(t)\|_k\le C_Ge^{-\beta t}$ with $\beta>0$ and $k>\max\{1,N/2\}$. Define
\begin{equation}\label{d4:eq:alpha-b}
\tilde{\alpha}=\min\{a_1,\beta\},\qquad \tilde{b}=\begin{cases}1,&\beta=a_1,\\0,&\beta\ne a_1.\end{cases}
\end{equation}
Then there is a constant $C_u$  such that
\begin{equation}\label{d4:eq:u-profile}
\|u(t)\|_\infty\le C_u(1+t)^{\tilde{b}} e^{-\tilde{\alpha} t}.
\end{equation}

\end{lemma}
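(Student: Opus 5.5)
We argue through the Duhamel (variation of constants) representation of the $u$-equation, in the same way as in the proof of Lemma~\ref{lem:transfer}. Since $\delta_1\equiv0$, the first equation reads $u_t=d_1\Delta u-a_1u+G$, and therefore
\begin{equation*}
u(t)=e^{-a_1t}e^{d_1t\Delta_N}u_0+\int_0^t e^{-a_1(t-\tau)}e^{d_1(t-\tau)\Delta_N}G(\tau)\,d\tau .
\end{equation*}
The first term is handled by $L^\infty$ contractivity of the Neumann heat semigroup: $\|e^{-a_1t}e^{d_1t\Delta_N}u_0\|_\infty\le\|u_0\|_\infty e^{-a_1t}$. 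For the integral term we use the $L^k$--$L^\infty$ smoothing estimate already quoted from \cite{Winkler2010}, with $\zeta=N/(2k)<1$, together with the hypothesis $\|G(\tau)\|_k\le C_Ge^{-\beta\tau}$. This gives
\begin{equation*}
\|u(t)\|_\infty\le\|u_0\|_\infty e^{-a_1t}+C_{\Omega,k}C_G\,e^{-a_1t}\int_0^t\bigl[1+(d_1(t-\tau))^{-\zeta}\bigr]e^{(a_1-\beta)\tau}\,d\tau .
\end{equation*}

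The whole proof then reduces to a scalar estimate of
\begin{equation*}
I(t)=e^{-a_1t}\int_0^t\bigl[1+(d_1(t-\tau))^{-\zeta}\bigr]e^{(a_1-\beta)\tau}\,d\tau .
\end{equation*}
We substitute $\sigma=t-\tau$, which rewrites it as
\begin{equation*}
I(t)=e^{-\beta t}\int_0^t\bigl[1+(d_1\sigma)^{-\zeta}\bigr]e^{(\beta-a_1)\sigma}\,d\sigma ,
\end{equation*}
and then split into three cases according to the sign of $\beta-a_1$.

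\textbf{Case $\beta<a_1$.} The integrand decays exponentially in $\sigma$, and the singularity at $\sigma=0$ is integrable because $\zeta<1$. Hence the integral is bounded by $(a_1-\beta)^{-1}+d_1^{-\zeta}(a_1-\beta)^{\zeta-1}\Gamma_{\rm Euler}(1-\zeta)$, which gives $I(t)\le Ce^{-\beta t}=Ce^{-\tilde\alpha t}$.

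\textbf{Case $\beta>a_1$.} We keep the original form $e^{-a_1t}\int_0^t[1+(d_1\sigma)^{-\zeta}]e^{-(\beta-a_1)\tau}d\tau$, with $\sigma=t-\tau$ inside the bracket, and split the $\tau$-integral at $t/2$ (for $t\ge1$).
\begin{itemize}
\item On $[0,t/2]$ we have $\sigma\ge t/2\ge1/2$, so the bracket is bounded and the integral is at most $C/(\beta-a_1)$.
\item On $[t/2,t]$ the exponential is at most $e^{-(\beta-a_1)t/2}$, and $\int_0^{t/2}[1+(d_1\sigma)^{-\zeta}]d\sigma\le C(1+t)$. Since $(1+t)e^{-(\beta-a_1)t/2}$ is bounded, this piece is bounded as well.
\end{itemize}
For $t\le1$ the integral is trivially bounded. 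Altogether $I(t)\le Ce^{-a_1t}=Ce^{-\tilde\alpha t}$.

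\textbf{Case $\beta=a_1$.} The integral equals $\int_0^t[1+(d_1\sigma)^{-\zeta}]d\sigma=t+d_1^{-\zeta}t^{1-\zeta}/(1-\zeta)\le C(1+t)$. This gives $I(t)\le C(1+t)e^{-a_1t}$, which is exactly the case $\tilde b=1$.

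Combining the three cases with the estimate for the $u_0$-term, and using $e^{-a_1t}\le e^{-\tilde\alpha t}$, we obtain \eqref{d4:eq:u-profile}. The constant $C_u$ depends only on $\|u_0\|_\infty$, $C_G$, $C_{\Omega,k}$, $d_1$, $\zeta$, $a_1$ and $\beta$.

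The only delicate point is the case $\beta>a_1$. There the weak singularity $(t-\tau)^{-\zeta}$ has to be controlled uniformly in $t$, and the splitting at $t/2$ above is how we do it. Positivity of $u$, and global existence from the earlier theorems, guarantee that the Duhamel formula is valid for all $t>0$.
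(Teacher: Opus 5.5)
Your proposal is correct and follows essentially the same route as the paper: Duhamel's formula for $u$ with the $L^k$--$L^\infty$ Neumann heat-semigroup smoothing estimate, then a three-case analysis of the scalar convolution according to whether $\beta<a_1$, $\beta=a_1$ or $\beta>a_1$, with the same Gamma-function bound in the first case and the same explicit integral $t+d_1^{-\zeta}t^{1-\zeta}/(1-\zeta)$ in the second. The only difference is the case $\beta>a_1$: the paper asserts a uniform bound in one line, and its displayed constant has $d_1$ where $\beta-a_1$ should appear, whereas your splitting of the $\tau$-integral at $t/2$ gives a cleaner, fully justified version of that same step.
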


{\bf Proof:}
Let $\zeta=N/(2k)\in(0,1)$.  By the results in \cite[Lemma 1.3 and its proof]{Winkler2010}, we have
$$
\|e^{d_1t\Delta_N}f\|_\infty
\le C_{\Omega,k}[1+(d_1t)^{-\zeta}]\|f\|_k.
$$
Using Duhamel principle, we obtain
\begin{align}\label{d4:eq:duhamel}
\|u(t)\|_\infty\le{}&\|u_0\|_\infty e^{-a_1t}\nonumber\\
&+C_{\Omega,k}C_G\int_0^t e^{-a_1(t-\tau)}
 [1+(d_1(t-\tau))^{-\zeta}]e^{-\beta\tau}d\tau.
\end{align}
If $\beta<a_1$, by elementary analysis, the integral in (\ref{d4:eq:duhamel}) can be bounded by
$$
e^{-\beta t}\left[\frac1{a_1-\beta}
+d_1^{-\zeta}(a_1-\beta)^{\zeta-1}\Gamma_{\rm E}(1-\zeta)\right],
$$
where $\Gamma_{\rm E}$ is the Euler gamma function. If $\beta=a_1$, it is
$$
e^{-a_1t}\left[t+\frac{d_1^{-\zeta}}{1-\zeta}t^{1-\zeta}\right]
\le C(1+t)e^{-a_1t}.
$$
If $\beta>a_1$, multiplying it by $e^{a_1t}$ and set $d=\beta-a_1>0$, it can be bounded by
$$
\int_0^t [1+(d_1(t-\tau))^{-\zeta}]e^{-d\tau}d\tau\leq \frac{ 1+d_1^{-\zeta}}{\min\{d_1, (1-\zeta)\}}.
$$
 In particular, for every $0<\gamma<\alpha$ there is $C_\gamma$ with $\|u(t)\|_\infty\le C_\gamma e^{-\gamma t}$.\hfill $\Box$

Now we will establish the uniform decay rates of $u$ and $v$ when sources are zero.

\label{sec:zero-asymptotics}
In convience, we define the explicit positive convolution
\begin{equation}\label{d4:eq:conv}
\mathcal{J}_{\tilde{A},\tilde{B},\rho}(t)=\int_0^t e^{-\tilde{A}(t-\tau)}(1+\tau)^\rho e^{-\tilde{B}\tau}d\tau
\end{equation}
for $\tilde{A},\tilde{B}>0$ and $\rho\ge0$. Obviously, 
\begin{equation}\label{d4:eq:conv-closed}
\mathcal{J}_{\tilde{A},\tilde{B},0}(t)=
\begin{cases}
(e^{-\tilde{B}t}-e^{-\tilde{A}t})/(\tilde{A}-\tilde{B}),&\tilde{A}\ne \tilde{B},\\
t e^{-\tilde{A}t},&\tilde{A}=\tilde{B}.
\end{cases}
\end{equation}

\begin{theorem}\label{d4:thm:zero}
Under the assumptions of (\ref{zijia1}), $\delta_1(x)=\delta_2(x)\equiv 0$, $u_0(x), v_0(x)>0$ for every $x\in \Omega$, and $\frac{\partial u_0}{\partial \nu}=\frac{\partial v_0}{\partial \nu}$.
Moreover, either \eqref{eq:closed-weights} or \eqref{d4:eq:mu}. Let $C_G,\beta>0$ be those in \eqref{eq:forcing-table} and $\tilde{\alpha},\tilde{b},C_u$ be defined in Lemma~\ref{d4:lem:transfer}, and 
\begin{equation}\label{d4:eq:AB}
\tilde{A}=(s+1)a_2,\qquad \tilde{B}=r\tilde{\alpha},\qquad \rho=\tilde{b}r.
\end{equation}
Then 
\begin{equation}\label{d4:eq:v-envelope}
\|v(t)\|_\infty^{s+1}
\le\|v_0\|_\infty^{s+1}e^{-\tilde{A}t}
 +(s+1)C_u^r\mathcal{J}_{\tilde{A},\tilde{B},\rho}(t).
\end{equation}
In particular,  there is a constant $C_v$ such that 
\begin{equation}\label{d4:eq:v-profile}
\|v(t)\|_\infty\le C_v(1+t)^\ell e^{-\tilde{a} t},\quad
\ell=\begin{cases}
 \tilde{b}r/(s+1),&\tilde{B}<\tilde{A},\\
 (\tilde{b}r+1)/(s+1),&\tilde{B}=\tilde{A},\\
 0,&\tilde{B}>\tilde{A},
\end{cases}
\end{equation}
and $\tilde{a}=\min\{a_2,r\tilde{\alpha}/(s+1)\}$.
\end{theorem}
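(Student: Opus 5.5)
The plan is to feed the decay of $u$ from Lemma~\ref{d4:lem:transfer} into a scalar comparison argument for $v$, after substituting $w=v^{s+1}$, which linearizes the $v$-equation's reaction term.

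First I check the hypotheses of Lemma~\ref{d4:lem:transfer}. In the zero-source case, either Theorem~\ref{thm:tc} (with $\mu>0$) or Theorem~\ref{thm:companion} gives $\|G(t)\|_k\le C_Ge^{-\beta t}$ with $k=m/p>\max\{1,N/2\}$, where $C_G$ and $\beta$ are as listed in \eqref{eq:forcing-table}. Lemma~\ref{d4:lem:transfer} then yields $\|u(t)\|_\infty\le C_u(1+t)^{\tilde b}e^{-\tilde\alpha t}$, and hence
$$u^r\le C_u^r(1+t)^{\rho}e^{-\tilde B t}.$$

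Next I compare $v$ with the solution $\rho(t)$ of the ODE
$$\rho'=-a_2\rho+C_u^r(1+t)^{\rho}e^{-\tilde Bt}\rho^{-s},\qquad \rho(0)=\|v_0\|_\infty.$$
Since $v^{-s}$ is monotone in $v$, this comparison is legitimate. Indeed, $\rho$ is spatially constant, so it satisfies the Neumann condition, and the reaction term $-a_2z+f(t)z^{-s}$ is locally Lipschitz for $z>0$. Positivity of $v$ comes from \eqref{eq:finite-positive}. The parabolic comparison principle then gives $v\le\rho$; this uses that $v$ is a subsolution of the scalar problem with $u^r$ replaced by its upper bound.

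Setting $w=\rho^{s+1}$, the ODE becomes the linear equation
$$w'=-(s+1)a_2w+(s+1)C_u^r(1+t)^{\rho}e^{-\tilde Bt}.$$
Variation of constants gives exactly
$$w(t)=\|v_0\|_\infty^{s+1}e^{-\tilde At}+(s+1)C_u^r\mathcal J_{\tilde A,\tilde B,\rho}(t),$$
which proves \eqref{d4:eq:v-envelope}. The main subtlety is justifying the comparison when $-1<s<0$, where $z^{-s}$ is increasing rather than decreasing. In both signs, however, the right-hand side is locally Lipschitz on compact subsets of $(0,\infty)$, and $v$ and $\rho$ stay positive, so the standard comparison principle still applies. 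Alternatively, one can set $w=v^{s+1}$ directly in the PDE: then $w_t-d_2\Delta w\le -(s+1)a_2w+(s+1)u^r$, because the gradient term $-d_2(s+1)s\,v^{s-1}|\nabla v|^2$ has the wrong sign when $s>0$. The ODE comparison is therefore safer.

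Finally, I estimate $\mathcal J$ to obtain \eqref{d4:eq:v-profile}. Using $(1+\tau)^\rho\le(1+t)^\rho$ gives $\mathcal J_{\tilde A,\tilde B,\rho}\le(1+t)^\rho\mathcal J_{\tilde A,\tilde B,0}$. The closed form \eqref{d4:eq:conv-closed} then yields three cases:
\begin{itemize}
\item if $\tilde B<\tilde A$, the bound is $C(1+t)^\rho e^{-\tilde Bt}$;
\item if $\tilde B=\tilde A$, it is $C(1+t)^{\rho+1}e^{-\tilde At}$;
\item if $\tilde B>\tilde A$, it is $Ce^{-\tilde At}$, since $\int_0^\infty(1+\tau)^\rho e^{-(\tilde B-\tilde A)\tau}d\tau<\infty$.
\end{itemize}
In the last case I do not use the crude bound above; I keep the weight inside the integral. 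Adding the term $\|v_0\|_\infty^{s+1}e^{-\tilde At}$ and taking the $(s+1)$-th root gives the exponent $\tilde a=\min\{\tilde A,\tilde B\}/(s+1)=\min\{a_2,r\tilde\alpha/(s+1)\}$, with the power $\ell$ as stated. Here I use $(x+y)^{1/(s+1)}\le C(x^{1/(s+1)}+y^{1/(s+1)})$.
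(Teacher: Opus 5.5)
Your proposal is correct and follows the paper's proof essentially step for step. Both arguments use the spatially constant supersolution $z'=-a_2z+\mathcal U(t)^rz^{-s}$, $z(0)=\|v_0\|_\infty$, and the substitution $w=z^{s+1}$, which yields the envelope exactly by variation of constants. Both then apply the same three-case bounds on $\mathcal J_{\tilde A,\tilde B,\rho}$, keeping the weight inside the integral when $\tilde B>\tilde A$, before taking the $(s+1)$-th root.

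Your side remark about substituting directly in the PDE has the sign backwards. The term $-d_2(s+1)s\,v^{s-1}|\nabla v|^2$ is nonpositive, hence harmless, for $s\ge0$; it has the wrong sign only for $-1<s<0$. That is why the ODE comparison, justified as you say by local Lipschitz continuity on $(0,\infty)$ and positivity of $v$ and $z$, is the route covering all $s>-1$. Also avoid using $\rho$ both for the comparison function and for the exponent $\tilde b r$.
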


{\bf Proof:}
Let $\mathcal U(t)=C_u(1+t)^{\tilde{b}} e^{-\tilde{\alpha} t}$ and  $z(t)$ be a spatially constant function satisfying
$$
z'=-a_2z+\mathcal U(t)^r z^{-s},\qquad z(0)=\|v_0\|_\infty.
$$
Since $u\le\mathcal U$, the function $z$ is a supersolution for the $v$ equation and $v\le z$. 

Letting $w=z^{s+1}$, we can get
$$
w'=-(s+1)a_2w+(s+1)C_u^r(1+t)^{\tilde{b}r}e^{-r\tilde{\alpha} t}
$$
and solve $w=\|v_0\|_\infty^{s+1}e^{-\tilde{A}t}
 +(s+1)C_u^r\mathcal{J}_{\tilde{A},\tilde{B},\rho}(t)$ which is exactly the right-hand side of \eqref{d4:eq:v-envelope}. We need to estimate  $\mathcal{J}_{\tilde{A},\tilde{B},\rho}(t)$ in different cases.

If $\tilde{B}<\tilde{A}$, since $(1+\tau)^\rho\leq (1+t)^\rho$ and $e^{-\tilde{A}(t-\tau)}e^{-\tilde{B}\tau}\leq e^{-\tilde{B}t}e^{(\tilde{A}-\tilde{B})\tau}$, \eqref{d4:eq:conv} implies
$$
\mathcal J_{\tilde{A},\tilde{B},\rho}(t)\le
\frac{(1+t)^\rho e^{-\tilde{B}t}}{\tilde{A}-\tilde{B}}.
$$
If $\tilde{B}=\tilde{A}$, it is exactly
$$
\frac{(1+t)^{\rho+1}-1}{\rho+1}e^{-\tilde{A}t}.
$$
If $\tilde{B}>\tilde{A}$, it is bounded by
$$
e^{-\tilde{A}t}\int_0^\infty(1+\tau)^\rho e^{-(\tilde{B}-\tilde{A})\tau}d\tau.
$$

Taking the positive $(s+1)$-th root and using (\ref{d4:eq:v-envelope}), we can obtain \eqref{d4:eq:v-profile}.
\hfill $\Box$

As a corollary of (\ref{d4:thm:zero}), we have
 
\begin{corollary}\label{d4:cor:loss}
Under the assumptions of Theorem~\ref{d4:thm:zero}, for any
$$
0<\gamma_u<\min\{a_1,\beta\},\qquad
0<\gamma_v<\min\{a_2,r\gamma_u/(s+1)\},
$$
there are constants $C_1,C_2$ such that
$\|u(t)\|_\infty\le C_1e^{-\gamma_ut}$ and
$\|v(t)\|_\infty\le C_2e^{-\gamma_vt}$.
\end{corollary}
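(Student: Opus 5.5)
The corollary follows by absorbing the polynomial factors of Lemma~\ref{d4:lem:transfer} and Theorem~\ref{d4:thm:zero} into a slightly slower exponential. For $u$ I start from \eqref{d4:eq:u-profile}: $\|u(t)\|_\infty\le C_u(1+t)^{\tilde b}e^{-\tilde\alpha t}$ with $\tilde\alpha=\min\{a_1,\beta\}$ and $\tilde b\in\{0,1\}$. Since $\gamma_u<\tilde\alpha$, the function $(1+t)^{\tilde b}e^{-(\tilde\alpha-\gamma_u)t}$ is bounded on $[0,\infty)$. Its supremum is $1$ if $\tilde b=0$. If $\tilde b=1$, it is at most $\max\{1,(\tilde\alpha-\gamma_u)^{-1}e^{\tilde\alpha-\gamma_u-1}\}$, by elementary calculus, or more crudely by $1+t\le e^{\epsilon t}/\epsilon$ with $\epsilon=\tilde\alpha-\gamma_u$. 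So $C_1=C_u\sup_{t\ge0}(1+t)^{\tilde b}e^{-(\tilde\alpha-\gamma_u)t}$ works.

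For $v$ I do not feed the $u$-rate $\gamma_u$ back into the supersolution argument. I use \eqref{d4:eq:v-profile} directly: $\|v(t)\|_\infty\le C_v(1+t)^\ell e^{-\tilde a t}$ with $\tilde a=\min\{a_2,r\tilde\alpha/(s+1)\}$. Since $\gamma_u<\tilde\alpha$, we have $r\gamma_u/(s+1)<r\tilde\alpha/(s+1)$, hence $\min\{a_2,r\gamma_u/(s+1)\}\le\tilde a$. Therefore $\gamma_v<\tilde a$ strictly. Again $(1+t)^\ell e^{-(\tilde a-\gamma_v)t}$ is bounded on $[0,\infty)$, with supremum at most $\max\{1,(\ell/(\tilde a-\gamma_v))^\ell e^{-\ell+(\tilde a-\gamma_v)}\}$, and I set $C_2=C_v$ times this bound.

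An alternative route for $v$, if one prefers not to rely on the case analysis in Theorem~\ref{d4:thm:zero}, is to rerun its comparison with $\mathcal U(t)=C_1e^{-\gamma_ut}$. Then $w=z^{s+1}$ solves a linear ODE whose forcing is a pure exponential, and \eqref{d4:eq:conv-closed} with $\rho=0$ gives the explicit bound. The only issue there is the resonant case $(s+1)a_2=r\gamma_u$, which produces a factor $t$; the strict inequality on $\gamma_v$ absorbs it as above. There is no real obstacle in either route. The one point needing care is checking that $\gamma_v<\tilde a$ strictly, so that the polynomial factor $(1+t)^\ell$, including the borderline $\tilde B=\tilde A$ case, is dominated.
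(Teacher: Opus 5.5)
Your argument is correct and is essentially the paper's: the corollary is stated without a separate proof, as a consequence of Lemma~\ref{d4:lem:transfer} (whose proof ends with exactly your absorption remark for $u$) and of \eqref{d4:eq:v-profile}, and your check that $\gamma_v<\min\{a_2,r\gamma_u/(s+1)\}\le\tilde a$ is precisely what lets the factor $(1+t)^\ell$ be absorbed. One slip in an aside: your ``crude'' bound $1+t\le e^{\epsilon t}/\epsilon$ fails for $\epsilon>1$ (already at $t=0$), so use $1+t\le\max\{1,\epsilon^{-1}\}e^{\epsilon t}$ instead; this does not affect the proof, since your primary bound $\max\{1,\epsilon^{-1}e^{\epsilon-1}\}$ is correct.
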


We will give asymptotic behavior for the solution when  $\delta_1\equiv 0$ and $\delta_2\not\equiv 0$. In 
this case, let $h_2$ be as in Lemma~\ref{lem:floor}, that is,
\begin{equation}\label{d4:eq:h2}
(-d_2\Delta+a_2)h_2=\delta_2,\quad\partial_\nu h_2=0,\quad
\vartheta=\min\{1,\min(v_0/h_2)\},\quad\eta_v=\vartheta\min h_2>0
\end{equation}
and the resolvent representation is
$$
h_2(x)=\int_0^\infty e^{-a_2t}e^{d_2t\Delta_N}\delta_2\ dt.
$$
We establish the following results on convergence to the inhibitor source response.
\begin{theorem}\label{d4:thm:source}
Suppose that $\delta_1\equiv 0$, $\delta_2\not\equiv0$, and \eqref{eq:closed-weights}, \eqref{d4:eq:source-margin} hold. Then there exist constants $C_*,C_w>0$, depending on the data, fixed coefficients, source and selected weight, such that
\begin{equation}\label{d4:eq:sharp-u}
(\min u_0)e^{-a_1t}\le u(x,t)\le C_*e^{-a_1t},
\end{equation}
\begin{equation}\label{d4:eq:source-G-infty}
\|G(t)\|_\infty\le\eta_v^{-q}C_*^p e^{-pa_1t},
\end{equation}
and
\begin{equation}\label{d4:eq:w-profile}
\|v(t)-h_2\|_\infty\le
\begin{cases}
 C_we^{-\min\{a_2,ra_1\}t},&a_2\ne ra_1,\\
 C_w(1+t)e^{-a_2t},&a_2=ra_1.
\end{cases}
\end{equation}
In particular, $u\to 0$ and $v\to h_2$ uniformly. 
\end{theorem}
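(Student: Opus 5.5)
The plan is to bootstrap from the decay of $\|G\|_k$ recorded in the last row of \eqref{eq:forcing-table} to a sharp $L^\infty$ bound on $G$, and then to treat $u$ and $v$ separately by comparison. Since $\delta_1\equiv0$, the lower bound in \eqref{d4:eq:sharp-u} is exactly \eqref{eq:finite-positive}. For the upper bound, Theorem~\ref{thm:source} in its relaxed form, together with the last row of \eqref{eq:forcing-table}, gives $\|G(t)\|_k\le C_Ge^{-p\tilde\varepsilon t}$ with $k=m/p>\max\{1,N/2\}$. Lemma~\ref{d4:lem:transfer} then yields $\|u(t)\|_\infty\le C_u(1+t)^{\tilde b}e^{-\tilde\alpha t}$ with $\tilde\alpha=\min\{a_1,p\tilde\varepsilon\}>0$. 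This is not yet the sharp rate $e^{-a_1t}$.

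To reach the sharp rate I bootstrap in $L^\infty$. By Lemma~\ref{lem:floor} (Subcase 1, with $\eta_v$ from \eqref{d4:eq:h2}), $v\ge\eta_v$, so $\|G(t)\|_\infty\le\eta_v^{-q}\|u(t)\|_\infty^p\le C(1+t)^{p\tilde b}e^{-p\tilde\alpha t}$. I then apply the Duhamel formula with the $L^\infty$-contractive Neumann heat semigroup, no smoothing loss being needed:
$$\|u(t)\|_\infty\le e^{-a_1t}\Big(\|u_0\|_\infty+\int_0^t e^{a_1\tau}\|G(\tau)\|_\infty d\tau\Big).$$
If $p\tilde\alpha>a_1$ the integral converges as $t\to\infty$, which gives $u\le C_*e^{-a_1t}$ immediately. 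Otherwise each iteration multiplies the decay exponent by $p>1$, with at most polynomial factors. After finitely many steps the exponent exceeds $a_1$; if it lands exactly on $a_1$, I perturb slightly by using a rate $\gamma<\tilde\alpha$ as in Corollary~\ref{d4:cor:loss}. One more Duhamel step then gives \eqref{d4:eq:sharp-u}. Inserting this into $G\le\eta_v^{-q}u^p$ gives \eqref{d4:eq:source-G-infty}.

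For $v$, set $w=v-h_2$. It solves $w_t=d_2\Delta w-a_2w+u^rv^{-s}$ with Neumann data and $w(0)=v_0-h_2$. I need a bound $0\le u^rv^{-s}\le Ce^{-ra_1t}$. When $s\ge0$ this follows from $v\ge\eta_v$. When $-1<s<0$ I need an upper bound on $v$, which is provided by Lemma~\ref{lem:transfer}; with uniform boundedness in hand, the bound holds in both cases. The duhamel formula and $L^\infty$ contractivity then give
$$\|w(t)\|_\infty\le e^{-a_2t}\|v_0-h_2\|_\infty+C\int_0^te^{-a_2(t-\tau)}e^{-ra_1\tau}d\tau.$$
This is the closed form \eqref{d4:eq:conv-closed} with $\tilde A=a_2$, $\tilde B=ra_1$, which yields \eqref{d4:eq:w-profile}, including the $(1+t)$ factor in the resonant case $a_2=ra_1$.

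The main obstacle is the bootstrap from $\tilde\alpha$ to the sharp rate $a_1$, in particular the bookkeeping of polynomial factors and of the resonant iterations. I would handle it by first losing an arbitrarily small $\epsilon$ in every exponent and recovering the sharp constant only in the final step, where the integral $\int_0^\infty e^{(a_1-p\gamma)\tau}d\tau$ is finite.
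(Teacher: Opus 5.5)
Your proposal is correct. Your treatment of the lower bound, of \eqref{d4:eq:source-G-infty}, and of $w=v-h_2$ (including the use of $V$ when $-1<s<0$ and the closed form \eqref{d4:eq:conv-closed}) coincides with the paper's.

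The genuine difference is how you upgrade the preliminary rate $\gamma<\min\{a_1,p\tilde\varepsilon\}$ to the sharp rate $a_1$. You treat $G$ as an additive forcing and apply Duhamel with $L^\infty$ contractivity. That only closes once $\|G\|_\infty$ decays faster than $e^{-a_1t}$, which is why you need the finite iteration $\gamma\mapsto p\gamma$ and the bookkeeping of polynomial and resonant factors.

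The paper instead linearizes. It writes
$G=v^{-q}u^{p-1}\,u\le \eta_v^{-q}C_\gamma^{p-1}e^{-(p-1)\gamma t}\,u=:k(t)\,u$,
so $u$ is a subsolution of $z_t=d_1\Delta z+(-a_1+k(t))z$ with $k\in L^1(0,\infty)$. Comparison with the spatially constant solution then gives $u\le\|u_0\|_\infty e^{-a_1t}\exp\bigl(\int_0^\infty k\,dt\bigr)$ in a single step. This needs only some positive decay rate for $u$: any $\gamma>0$ works because $p>1$ makes $k$ integrable. It avoids the iteration and resonance cases entirely and yields the explicit constant \eqref{d4:eq:Cstar}.

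Your route is equally rigorous and uses nothing beyond $L^\infty$ contractivity. Its constant is less explicit, since it depends on how many iterations are needed before the exponent exceeds $a_1$. Note also that when $p\tilde\varepsilon>a_1$, Lemma~\ref{d4:lem:transfer} already delivers $e^{-a_1t}$ with no bootstrap at all.
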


{\bf Proof:}
Under the assumptions of this theorem, we know $\eta_u<u\leq U$ and $\eta_v<v\le V$. By the results of  Lemma~\ref{d4:lem:transfer}, $\| u_\infty\|\le C_\gamma e^{-\gamma t}$ for any fixed $0<\gamma<\min\{a_1,p\varepsilon\}$. Consequently,
$$
G=v^{-q} u^{p-1} u
\le \eta_v^{-q}C_\gamma^{p-1}e^{-(p-1)\gamma t}u.
$$
Compare the $u$ equation with the spatially constant solution of the associated linear time-dependent equation. As $p>1$, its coefficient in excess of $-a_1$ is integrable, so
$$
\|u(t)\|_\infty
\le \|u_0\|_\infty e^{-a_1t}
\exp\left(\frac{\eta_v^{-q}C_\gamma^{p-1}}{(p-1)\gamma}
[1-e^{-(p-1)\gamma t}]\right).
$$
Letting
\begin{equation}\label{d4:eq:Cstar}
C_*=\|u_0\|_\infty
\exp\left(\frac{\eta_v^{-q}C_\gamma^{p-1}}{(p-1)\gamma}\right)
\end{equation}
and recalling \eqref{eq:finite-positive}, we know that \eqref{d4:eq:sharp-u} holds, and \eqref{d4:eq:source-G-infty} follows immediately.

Let $\mathcal{Q}=u^rv^{-s}$. If $s\ge0$,set $C_s=\eta_v^{-s}$; if $-1<s<0$, set $C_s=V^{-s}$. Then
$$
\|\mathcal{Q}(t)\|_\infty\le C_sC_*^r e^{-ra_1t}.
$$
Let $w=v-h_2$. Then $w$ satisfies
$$
w_t=d_2\Delta w-a_2w+\mathcal{Q},\qquad\partial_\nu w=0.
$$
By $L^\infty$ contractivity of the operator semigroup, we have
\begin{equation}\label{d4:eq:w-exact}
\|w(t)\|_\infty\le e^{-a_2t}\|v_0-h_2\|_\infty
+C_sC_*^r\mathcal J_{a_2,ra_1,0}(t).
\end{equation}
Substituting \eqref{d4:eq:conv-closed} into the inequality above, we prove \eqref{d4:eq:w-profile}. \hfill $\Box$

\section{Examples and discussions}
\qquad In this section, we first give some examples to illustrate our results, then we compare our results with the previous ones.

{\bf Example 1.} Asymptotic behavior for the solution when $\delta_1(x)=\delta_2(x)\equiv 0$. Let
$$
N=1,\quad p=r=2,\quad q=s=1,\quad(d_1,d_2)=(9,1),
\quad(a_1,a_2)=(4,1),\quad(m,n)=(3,1/4).
$$
Then $K=25/9$, $k=3/2$, $y=1/12$,
$$
(m-1)(n+1)-mnK=5/12,\quad g=5/4,\quad\mu=1/2.
$$
Consequently, $\beta=p\mu=1$ and by Lemma~\ref{d4:lem:transfer}, we have 
$$
\|u\|_\infty\le C_ue^{-t}.
$$
Since $\tilde{A}=(s+1)a_2=2=r\tilde{\alpha}=\tilde{B}$,
\begin{equation}\label{d4:eq:D1-rate}
\|v(t)\|_\infty\le
\left(\|v_0\|_\infty^2+2C_u^2t\right)^{1/2}e^{-t}.
\end{equation}

{\bf Example 2. } Convergence to a nonconstant inhibitor response. Assume that 
$$
\delta_1=0,\qquad\delta_2(x)=2+\cos x,\qquad u_0(x)=v_0(x)\equiv 1
$$
on $\Omega=(0,\pi)$,  the exponents, diffusion coefficients and weights are the same as those Example 1.
Then
$$
h_2(x)=2+\frac{1}{2}cos x,\quad
\vartheta=2/5,\quad\eta_v=3/5,\quad
\varepsilon=4-(1/12+3)=11/12,
$$
 $p\varepsilon=11/6$ and $m\varepsilon=11/4$. By the results of Theorem~\ref{d4:thm:source}, we get
$$
e^{-4t}\le u(x,t)\le C_*e^{-4t},\qquad
\|G(t)\|_{\infty}\le C e^{-8t},\qquad
\|v(t)-2-\frac{1}{2}cos x\|_{\infty}\le C_we^{-t}.
$$

{\bf Example 3.} A source-response resonance. Assume that $\delta_1(x)\equiv 0$, $\delta_2(x)\equiv 6$, and $u_0(x)=v_0(x)\equiv 1$.
Let $$N=1,\quad p=r=2, \quad q=s=1, \quad d_1=d_2=1, \quad a_1=3, \quad a_2=6.$$
Take $m=3,n=3/4$. Then $k=3/2$, $y=1/4$ and
$$
H(y)=5/2<3,\qquad\varepsilon=1/2,\qquad h_2=1.
$$
By the results of Theorem~\ref{d4:thm:source}, we get
$$
\|u\|_\infty\le C e^{-3t},\qquad
\|v(t)-1\|_\infty\le C(1+t)e^{-6t}.
$$

{\bf Remark 1:} To guarantee the solution decay to zero as time tends to infinite, the conditions $\mu>0$ and  $a_1>H(c)$ cannot be removed. For example, if $\delta_1=\delta_2\equiv 0$, 
$$N=1, \quad p=r=2, \quad q=s=1, \quad d_1=d_2=a_1=a_2=1,$$ and  
$$m=3, \quad n=3/2,\quad L=3/2>1,$$ but
$$
\mu=0,\qquad a_1=H(c)=H(\min\{c,y_*\})=1.
$$
Then for every $R>0$, the constant functions $u=v\equiv R$ satisfy $-R+R^2/R=0$, which means that $(u,v)=(R,R)$ is a positive constant stationary solution, it doesn't uniformly converge to zero.

{\bf Remark 2:} We give some examples to show that global existence does not imply uniform boundedness. Let
$$
 p=r=2,
 \qquad q=s=1,
 \qquad a_1=a_2=a\in(0,1),
$$
and $d_1,d_2>0$ can be arbitrarily chosen.  We can seek for the solutions which are spatially constant.

1. If $\delta_1(x)=\delta_2(x)\equiv 0$, $u_0(x)=v_0(x)\equiv z_0$, we can find $u(x,t)=v(x,t)=z(t)$ such that $(u(x,t),v(x,t))=(z(t),z(t))$ is a solution to (\ref{eq:pde}). Here 
$$
z'(t)=(1-a)z(t),
 \qquad
z(t)=z_0e^{(1-a)t}.
$$

2. If $\delta_1(x)=\delta_2(x)\equiv D_0>0$, we can seek $u(x,t)=v(x,t)=z(t)$ such that $(u(x,t),v(x,t))=(z(t),z(t))$ is a solution to (\ref{eq:pde}). Here
$$
 z'(t)=(1-a)z(t)+D_0,
$$
and
$$
 z(t)=\left(z_0+\frac{D_0}{1-a}\right)e^{(1-a)t}
 -\frac{D_0}{1-a}.
 $$

3. If $\delta_1(x)\equiv D_0>0$, $\delta_2(x)\equiv 0$, $u_0(x)>c_0=\frac{D_0}{a}$ and $v_0(x)=u_0(x)-c_0$ are constant functions, then for
$$
 u'(t)=-au(t)+\frac{u^2(t)}{u(t)-c_0}+D_0
 =(1-a)u(t)+(1+a)c_0+\frac{c_0^2}{u(t)-c_0}
$$ and 
$v(t)=u(t)-c_0$, $(u(t),v(t)$ is a solution to (\ref{eq:pde}),  $u'(t)\ge(1-a)u(t)$ and $u$ is increasing. Obviously, the solution is global, strictly positive, but unbounded.

4. If $\delta_1(x)\equiv 0$, $\delta_2(x)\equiv D_0>0$, $u_0(x)\ge c_0=\frac{2c_0}{1-a}$ and $v_0(x)=u_0(x)+c_0$, then for 
 $$
 u'(t)=-au(t)+\frac{u^2(t)}{u(t)+c_0}
 =(1-a)u(t)-c_0+\frac{c_0^2}{u(t)+c_0}.
$$ 
and $v(t)=u(t)+c_0$, $(u(t),v(t)$ is a solution to (\ref{eq:pde}), $u'(t)\geq  \frac{1-a}{2}u(t)$ and $u(t)$ is increasing. Consequently, the solution is global, strictly positive, 
but unbounded.

Now we compare our results with the previous ones. 

In \cite{AnSong2026}, An and Song  provided a general PDE comparison result requiring local Lipschitz regularity on the closed nonnegative quadrant, $f_v\ge0$, but in our paper the reaction $f$  is singular at $v=0$ and
$$
f_v=-q u^p v^{-q-1}<0\quad(u,v>0).
$$

In \cite{LPS2017}, Theorem 1.3(i), Remark 1.3(ii),  Li, Peng and Song proved global existence of the solution under the same critical exponent $qr=(p+1)(s+1)$ and $p-1<r$, but they need $d_1=d_2$ and the initial datum are strictly positive. Our results remove the equality restriction on $d_1$ and $d_2$.

In \cite{Song2020}  the second author of this paper obtained some global existence results when $p-1=r$, in \cite{CMX2020}, Chen, Melnick and Xu got some global existence results when $p-1-r>0$, $q-1-s>0$,  in \cite{Gouadria2026}, Gouadria studied  global existence topics when $(p-1)/r<\min\{q/(s+1),m/n,1\}$, but their exponent regime does not directly contain ours.

 In \cite{HY2016} Henine and Youkana established some decay results when $0<p-1<r\min\{q/(s+1),1\}$, in \cite{ST2011}, Suzuki and Takagi obtained collapse and convergence under the strict ratio $(p-1)/r<q/(s+1)$ and nonnegative $s$. Consequently neither the spatial response $h_2$ nor the full linear activator decay order in our paper is presented as a previously unknown phenomenon.

There are still many open problems on  Gierer--Meinhardt system, it needs to do further extensions of previous approach.

\end{document}